\documentclass[11pt,reqno]{amsart}
\usepackage{fancyhdr}
\usepackage[british]{babel}

\usepackage{amssymb, amsmath, amsthm, amsfonts, enumerate}
\usepackage{amsmath,setspace,scalefnt}
\usepackage[dvipsnames,svgnames,table]{xcolor}
\usepackage{graphicx,tikz,caption,subcaption}
\usetikzlibrary{patterns}
\usetikzlibrary{shapes}
\usepackage{fullpage}
\usepackage{hyperref}
\usepackage{multicol}
\usepackage{float}
\usepackage{cleveref}
\usepackage[normalem]{ulem}
\usepackage{enumerate}
\usepackage{comment}

\usepackage[margin=2.5cm]{geometry}

\definecolor{gray}{rgb}{0.25, 0.25, 0.25}

\newtheorem{theorem}{Theorem}[section]
\newtheorem{lemma}[theorem]{Lemma}

\newtheorem*{theorem*}{Theorem}

\theoremstyle{definition}

\newcounter{propcounter}

\theoremstyle{plain}

\theoremstyle{definition}

\theoremstyle{definition}

\theoremstyle{definition}

\theoremstyle{definition}

\theoremstyle{definition}

\theoremstyle{definition}

\def\cf{\mathcal F}

\title{Note on the codegree version of the Erd\H{o}s--Ko--Rado theorem}
\author{Luyining Gan}
\address{School of Mathematical Sciences, Beijing University of Posts and Telecommunications, Beijing, China; Key Laboratory of Mathematics and Information Networks (BUPT), Ministry of Education, Beijing, China}
\email{elainegan@bupt.edu.cn}
\thanks{LG was partially supported by the National Natural Science Foundation of China (12401446) and the Fundamental Research Funds for the Central Universities}

\author{Jie Han}
\address{School of Mathematics and Statistics, Beijing Institute of Technology, Beijing, China}
\email{han.jie@bit.edu.cn}
\thanks{JH was partially supported by the National Natural Science Foundation of China (12371341).}

\author{Seonghyuk Im}
\address{Center for AI and Natural Sciences, Korea Institute for Advanced Study (KIAS), Seoul, South Korea}
\email{seonghyuk@kias.re.kr}
\thanks{SI was partially supported by the National Research Foundation of Korea (NRF) grant funded by the Korea government (MSIT) No. RS-2023-00210430, supported by the Institute for Basic Science (IBS-R029-C4), and supported by a KIAS individual
Grant (AP109501) at Korea Institute for Advanced Study.}

\begin{document}

\begin{abstract}
    Kupavskii proved a codegree version of the Erd\H{o}s--Ko--Rado theorem by showing that for an intersecting family $\mathcal{F} \subseteq \binom{[n]}{k}$ with $n \geq 2k +3d/(1-d/k)$, the minimum $d$-degree of $\mathcal{F}$ is at most $\binom{n-d-1}{k-d-1}$.
    Huang and Zhang improved the bound on $n$ to $n \geq 2k+2d-3$.
    In this short note, we prove that if $d = k-1$, then the bound on $n$ can be improved to $2k + \sqrt{2k} + O(1)$.
    In addition, we extend our method to show that the bound on $n$ can be improved to $2k + 7k^{2/3}+O(k^{1/3})$ when $d=k-2$.
\end{abstract}

\maketitle

\section{Introduction}

Given a positive integer $n$, denote by $[n]$ the set $\{1, 2, \dots, n\}$.
Let $\cf\subseteq \binom{[n]}{k}$ be a family of $k$-subsets of $[n]$. 
A \emph{vertex cover} of $\cf$ is a set $S\subseteq V(\cf)$ such that $S\cap e \neq \emptyset$ for all $e\in \cf$. 
The family $\cf$ is called \emph{intersecting} if $A\cap B\neq \emptyset$ for all $A, B\in \cf$.
The family $\mathcal{F}_v = \{ e \subseteq [n] \mid v \in e, |e|=k\} \subseteq \binom{[n]}{k}$ is called a \emph{complete star centered at $v$}, or simply a \emph{complete star}. 

The Erd\H{o}s--Ko--Rado (EKR) theorem~\cite{EKR61} is a classical result in extremal combinatorics, which states that for $n \ge 2k$, if a family $\cf\subseteq \binom{[n]}{k}$ is intersecting, then $|\cf|\le \binom{n-1}{k-1}$. 
Furthermore, when $n \ge 2k+1$, equality in the EKR theorem is achieved if and only if $\cf$ is a complete star.
Hilton and Milner~\cite{HM67} proved a strong stability version of the EKR theorem, showing that when $n\geq 2k+1$, every non-trivial intersecting family of $k$-subsets of $[n]$ has size at most $\binom{n-1}{k-1}-\binom{n-k-1}{k-1}+1$.

The EKR theorem has been studied extensively and many generalizations have been obtained; see the survey~\cite{DF83} for more details.
The degree version of the EKR theorem is one of the most extensively studied extensions in recent years.
One may view $\cf$ as a $k$-uniform hypergraph on the vertex set $[n]$, where the members of $\cf$ are its edges.
Given a family $\cf$ of $k$-subsets of $[n]$, the \emph{$S$-degree} of $\cf$ for $S \subseteq [n]$ is the number of sets in $\cf$ that contain $S$, denoted by $d_\cf(S)=|\{T \mid S \subseteq T \in \cf\}|$. 
For $1 \le d \le k$, the \emph{minimum $d$-degree} $\delta_d(\cf)$ is the smallest value of $d_\cf(S)$ among all $d$-subsets $S \subseteq [n]$.
Note that $\delta_1(\cf)$ is the minimum vertex degree of $\cf$.
Huang and Zhao~\cite{HZ17} proved that for $n> 2k$, every intersecting family $\cf\subseteq \binom{[n]}{k}$ has $\delta_1(\cf) \le \binom{n-2}{k-2}$, which implies the EKR theorem.
Later, Frankl and Tokushige~\cite{FT17} provided an elementary combinatorial proof of Huang and Zhao's result~\cite{HZ17} for the case $n\ge 3k$ based on the early work of Frankl~\cite{Fr87}.
The degree version of the Hilton--Milner theorem was proved by Frankl, Han, Huang, and Zhao~\cite{FHHZ18}.
For general $d$-degrees, Kupavskii~\cite{Ku19} proved a bound for sufficiently large $n$ by showing the following: If $1\le d<k$ and $n\geq 2k +3d/(1-d/k)$, then $\delta_d(\cf) \le \binom{n-d-1}{k-d-1}$ for every intersecting family $\cf \subseteq \binom{[n]}{k}$. 
Note that the lower bound on $n$ can be quadratic in $d$ and $k$ when $d$ is very close to $k$.
Huang and Zhang~\cite{HZ26} proved the following theorem, which improves the bound by Kupavskii.

\begin{theorem}[\cite{HZ26}]\label{thm:HZ}
Given positive integers $k$ and $d$ satisfying $k>d\ge 2$, 
let $\mathcal{F} \subseteq \binom{[n]}{k}$ be an intersecting family with $n \geq 2k+2d-3$.
    Then, 
    $$ \delta_d(\mathcal{F}) \leq \binom{n-d-1}{k-d-1}. $$
\end{theorem}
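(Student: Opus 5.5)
We argue by contradiction. Suppose $\mathcal{F}\subseteq\binom{[n]}{k}$ is intersecting, $n\ge 2k+2d-3$, and $\delta_d(\mathcal{F})>\binom{n-d-1}{k-d-1}$. The plan is a link-and-count argument. For a $d$-set $S$, let $\mathcal{F}(S)=\{T\setminus S : S\subseteq T\in\mathcal{F}\}\subseteq\binom{[n]\setminus S}{k-d}$ be its link. The extremal configuration is the complete star: for $S\not\ni v$ the link consists of the $(k-d)$-sets through $v$, which has size exactly $\binom{n-d-1}{k-d-1}$. So the hypothesis says that every $d$-set has a link strictly larger than a star in the ground set $[n]\setminus S$ of size $n-d$.

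\textbf{Step 1: disjoint $d$-sets have cross-intersecting links.} Take two disjoint $d$-sets $S_1,S_2$. Then $\mathcal{F}(S_1)$ and $\mathcal{F}(S_2)$, restricted to $[n]\setminus(S_1\cup S_2)$, must be cross-intersecting, since any $A\in\mathcal{F}(S_1)$ and $B\in\mathcal{F}(S_2)$ that are disjoint outside $S_1\cup S_2$ give disjoint members $S_1\cup A$ and $S_2\cup B$ of $\mathcal{F}$. Links may, however, use vertices of the other $d$-set. So we split each link according to its intersection with $S_2$ (respectively $S_1$). The part of $\mathcal{F}(S_1)$ avoiding $S_2$ has size at least $\binom{n-d-1}{k-d-1}+1-d\binom{n-d-1}{k-d-1}$, which is too lossy. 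Instead, I would choose $S_2$ adapted to $\mathcal{F}$: take $T\in\mathcal{F}$ and let $S_2$ be a $d$-subset of the complement of $S_1\cup T$, or of a suitable set. The ground set $[n]\setminus(S_1\cup S_2)$ has $n-2d\ge 2(k-d)+(2d-3)-d$ elements, and we then apply a Hilton--Milner / cross-intersecting bound (Frankl--Tokushige or Pyber type) for families of $(k-d)$-sets on $m=n-2d$ points. The threshold $2k+2d-3$ suggests exactly the condition under which $m\ge 2(k-d)+$ something, so that the cross-intersecting inequality $|\mathcal{A}|+|\mathcal{B}|$, or $|\mathcal{A}||\mathcal{B}|\le\binom{m-1}{k-d-1}^2$, applies.

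\textbf{Step 2: both links are large, hence a common centre.} Combining Step 1 with the lower bound on the link sizes, both restricted links must be essentially full stars with the same centre $v$; otherwise the Hilton--Milner-type cross-intersecting bound is violated. Varying the $d$-sets then forces a single vertex $v$ to lie in every member of $\mathcal{F}$ avoiding a given small set. Finally, take a $d$-set $S$ containing $v$'s complement appropriately. Concretely, consider $d$-sets $S$ with $v\notin S$ and check that the link of $S$ cannot exceed the star, or consider $S\ni v$ and use that $\mathcal{F}$ is contained in the star at $v$ plus few exceptional sets. A $d$-set $S$ containing no exceptional structure then has degree at most $\binom{n-d-1}{k-d-1}$, which is a contradiction.

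\textbf{Main obstacle.} The hard part is Step 1's handling of links that meet the other $d$-set: the loss from sets touching $S_2$ is of order $d\binom{n-d-2}{k-d-2}$. This must be absorbed using the slack $n-2k\ge 2d-3$, and that is where the precise bound $2d-3$ arises. I would first try averaging over the choice of $S_2$ inside a fixed $(n-d)$-set: average the number of link members of $S_1$ meeting $S_2$, obtaining a loss factor $\frac{d(k-d)}{n-d}$. I would then verify that the cross-intersecting inequality survives when $n\ge 2k+2d-3$.
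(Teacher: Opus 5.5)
The paper does not prove this theorem; it is quoted from Huang--Zhang as background, so there is no in-paper proof to compare against. Judged on its own, your proposal has a genuine gap at exactly the step you call the main obstacle, and Step~2 is not yet an argument.

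\textbf{Step 1 does not give a usable inequality.} The only cross-intersecting pair it produces is $\mathcal{A}=\{A\in\mathcal{F}(S_1):A\cap S_2=\emptyset\}$ and $\mathcal{B}=\{B\in\mathcal{F}(S_2):B\cap S_1=\emptyset\}$ on $m=n-2d$ points. To contradict $|\mathcal{A}||\mathcal{B}|\le\binom{m-1}{k-d-1}^2$ you need a single pair $(S_1,S_2)$ for which both families are large.

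For the star with centre outside $S_1\cup S_2$, exactly $\binom{n-d-1}{k-d-1}-\binom{n-2d-1}{k-d-1}$ link members meet $S_2$. So your whole margin is the $+1$ in the hypothesis: you must show that at most this many members of $\mathcal{F}(S_1)$ meet $S_2$, with zero slack.

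Averaging over $S_2$ cannot achieve this. By the identity $\binom{n-d-1}{k-d-1}\binom{n-k}{d}=\binom{n-d-1}{d}\binom{n-2d-1}{k-d-1}$, a link of size exactly $\binom{n-d-1}{k-d-1}+1$ (all the hypothesis guarantees) has on average
\[
\frac{n-2d}{n-d}\binom{n-2d-1}{k-d-1}+\frac{\binom{n-k}{d}}{\binom{n-d}{d}}
\]
members disjoint from a uniformly random $d$-set $S_2\subseteq[n]\setminus S_1$. This is strictly below $\binom{n-2d-1}{k-d-1}$ throughout the range $k-d\ge 2$, $n\ge 2k+2d-3$, because there $\frac{d}{n-d}\binom{n-2d-1}{k-d-1}\ge 1$.

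The deficit comes from $S_2$ hitting the would-be centre with probability $d/(n-d)$, and one extra set cannot pay for that. Choosing $S_2$ to avoid the centre presupposes the structure you are trying to prove. Even then you would still need $\mathcal{B}$ large for the same pair. Your alternative, taking $S_2$ outside $S_1\cup T$, comes with no estimate at all. So nothing in the proposal actually produces the threshold $2k+2d-3$. Also, $n-2d\ge 2k-3=2(k-d)+2d-3$, not $2(k-d)+(2d-3)-d$.

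\textbf{Step 2 is not spelled out.} It appeals to an unstated stability version of the cross-intersecting product bound, with no quantitative form matched to what Step~1 would supply. The passage from near-star links to a global centre (varying the $d$-sets, exceptional sets, ``a $d$-set containing $v$'s complement appropriately'') is not an argument.

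The finish itself is easy once you have a vertex $v$ such that the members of $\mathcal{F}$ avoiding $v$ do not cover every $d$-subset of $[n]\setminus\{v\}$: an uncovered $S$ has $d_{\mathcal{F}}(S)\le\binom{n-d-1}{k-d-1}$. But producing such a $v$ is precisely what Steps~1--2 were supposed to deliver, and they do not.

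For comparison, the device this paper uses for $d=k-1$ and $d=k-2$ avoids your overlap problem altogether. It tests with $d$-sets disjoint from $V(\mathcal{H})$ for a bounded-size subfamily $\mathcal{H}\subseteq\mathcal{F}$. Every link member is then a $(k-d)$-element transversal of $\mathcal{H}$, so nothing is lost to the other set. The work moves into building $\mathcal{H}$ with few small transversals, which is where that paper's (different) thresholds on $n$ come from.
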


They also showed that unlike the case of the EKR theorem, $n \geq 2k$ is not sufficient to guarantee the same bound on $\delta_d(\cf)$ for $d=2$, and conjectured that $n \geq 2k+1$ is sufficient for all $d\ge 2$.
The bound in \Cref{thm:HZ} is asymptotically best possible when $d$ is close to $1$, but it is far from tight when $d$ is close to $k$.
Motivated by this, we focus on the case where $d$ is very close to $k$.
We first improve the bound on $n$ when $d=k-1$.
Our main theorem is the following.
\begin{theorem}\label{thm:main_k-1}
    Given a positive integer $k\ge 2$, let $\mathcal{F} \subseteq \binom{[n]}{k}$ be an intersecting family with $n \geq 2k + \left\lceil\frac{\sqrt{8k+1}-1}{2}\right\rceil+3$.
    Then, 
    $$ \delta_{k-1}(\mathcal{F}) \leq 1. $$
    Furthermore, equality holds if and only if $\cf$ is a complete star.
\end{theorem}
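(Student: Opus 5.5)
The plan is to prove that $\delta_{k-1}(\cf)\ge 1$ forces $\cf$ to be a complete star. This gives the whole statement, because in a complete star centred at $v$ every $(k-1)$-set $S\not\ni v$ lies only in $S\cup\{v\}$, so $\delta_{k-1}=1$. Consequently $\delta_{k-1}(\cf)\ge 2$ is impossible, and equality holds only for complete stars.

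\emph{Easy case: $\cf$ is trivial.} Suppose some vertex $v$ lies in every edge. For each $(k-1)$-set $S$ with $v\notin S$, the only $k$-set containing $S$ that can be an edge is $S\cup\{v\}$. So $\delta_{k-1}\ge 1$ forces all these sets into $\cf$, and $\cf$ is the complete star at $v$.

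\emph{Reduction for general $\cf$.} Fix an edge $B$ and let $X=[n]\setminus B$, so $m:=|X|=n-k\ge k+\lceil(\sqrt{8k+1}-1)/2\rceil+3$.
\begin{itemize}
\item Every $(k-1)$-set $S\subseteq X$ lies in an edge, and that edge must meet $B$. Hence it has the form $S\cup\{b\}$ for some $b=c_B(S)\in B$.
\item The map $c_B$ colours $\binom{X}{k-1}$ with colours from $B$. If $c_B(S)\neq c_B(T)$, then $S\cap T\neq\emptyset$, since $\cf$ is intersecting. So the colour classes $\mathcal{G}_b$, $b\in B$, are pairwise cross-intersecting and partition $\binom{X}{k-1}$.
\item Suppose we can show that $c_B$ is constant, say equal to $b_B$. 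Then $S\cup\{b_B\}\in\cf$ for every $S\in\binom{X}{k-1}$. Any edge $C$ avoiding $b_B$ would have to meet every $(k-1)$-subset of $X$. This is impossible, since $|X\setminus C|\ge m-k\ge k-1$. Hence $b_B$ lies in every edge, and the easy case finishes the proof.
\end{itemize}

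So everything reduces to a \emph{colouring lemma}: $c_B$ must be constant, at least for a well-chosen edge $B$.

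\emph{Main obstacle.} The colouring lemma does not follow from pure Kneser-graph connectivity. Closing a class under ``disjoint from'' only connects $(k-1)$-sets $S,T$ with $|S\cup T|\le m-k+1$, and this requires $m\ge 2k-1$, i.e.\ $n\ge 3k-1$. To reach $n=2k+O(\sqrt k)$, I would use extra structure in two ways.
\begin{itemize}
\item \emph{Choice of $B$.} Choose $B$ to maximise the size of the largest colour class, or equivalently to make some vertex $b$ have the largest link inside $X$. Every edge of the form $S\cup\{b'\}$ with $b'\ne b$ must then meet all sets of $\mathcal{G}_b$.
\item \emph{Edges not contained in $X\cup\{b\}$.} Apply the same colouring argument to $(k-1)$-sets $S$ that meet $B$ in one prescribed vertex. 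This gives colourings of $(k-1)$-sets with $|S\cap B|=j$ for small $j$, linked consistently via edges sharing $k-1$ vertices.
\end{itemize}

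\emph{Expected counting.} Let $t$ be the number of distinct colours that appear. Pairwise cross-intersection, together with these linked colourings, should force the union of the "minority" classes to have a small cover, of size at most $t$. Each extra colour then costs one more vertex of $X$ outside that cover. I expect this to yield an inequality of the shape $\binom{t+1}{2}\ge k$ whenever $m\le k+t+2$.

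The threshold $\lceil(\sqrt{8k+1}-1)/2\rceil$ is exactly the least $t$ with $\binom{t+1}{2}\ge k$. This strongly suggests the intended argument sums contributions $1+2+\dots+t$, one per extra colour. So the first thing I would try is an induction on the number of colours, in which the $i$-th colour class forces $i$ vertices of $X$ into a common cover. Pinning this down is where I expect the real work to lie.
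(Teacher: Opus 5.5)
\textbf{There is a genuine gap: the proposal reduces the theorem to an equivalent statement and does not prove it.} Everything is pushed into the colouring lemma ($c_B$ constant), and for that you offer only an expected count.

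Worse, in the relevant range the colouring formulation has no force by itself. When $n\le 3k-3$ we have $|X|=n-k\le 2k-3$, so any two $(k-1)$-subsets of $X$ already intersect. The cross-intersection of the classes $\mathcal{G}_b$, and the consequence you draw from the choice of $B$, are therefore vacuous. All the content would have to come from the ``linked colourings'' of sets meeting $B$, which you leave unspecified.

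The idea you are missing is a \emph{small certificate}. Starting from an edge $e$, the paper greedily builds $\mathcal{F}_0\subseteq\mathcal{F}_1\subseteq\cdots$. At step $i$ it takes a $(k-1)$-set $W$ containing all of $V(\mathcal{F}_i)\setminus U_i$, padded from the core $U_i=\bigcap_{f\in\mathcal{F}_i}f$, and adds an edge through $W$. This adds at most one new vertex but removes at least $|V(\mathcal{F}_i)|-k$ vertices from the core. Since $|V(\mathcal{F}_i)|-k$ runs through $1,2,\dots,D$, we get $\binom{D+1}{2}\le k$. The result is $\mathcal{H}\ni e$ with $|\bigcap_{f\in\mathcal{H}}f|\le 1$ on at most $k+\lceil(\sqrt{8k+1}-1)/2\rceil+2$ vertices. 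That is where the triangular number comes from, not from counting colours.

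The certificate is then used as follows. A $(k-1)$-set $W$ disjoint from $V(\mathcal{H})$ lies in an edge $W\cup\{x\}$ that must meet every edge of $\mathcal{H}$ in $x$, which forces $\bigcap_{f\in\mathcal{H}}f=\{v\}$. Likewise, take an $(n-k)$-set $X'\supseteq V(\mathcal{H})$ and an edge $f\not\ni v$ through a $(k-1)$-subset of $X'\setminus\{v\}$. Then $\mathcal{H}\cup\{f\}$ has empty common intersection and lives on at most $n-k+1$ vertices, leaving a $(k-1)$-set of codegree $0$. Hence $\mathcal{F}[X']$ is a complete star at $v$.

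\textbf{Separately, the final step of your reduction is wrong as written.} The inequality $m-k\ge k-1$ is equivalent to $n\ge 3k-1$, which is the very regime you flag as too weak. At the minimum allowed $n$ it fails as soon as $k\ge 10$. Indeed, a complete star on $X\cup\{b_B\}$ alone does not exclude an edge $C\not\ni b_B$ with $|C\cap X|\ge n-2k+2$.

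This can be repaired by propagation. Let $B'=B\setminus\{b_B\}$. Consider an edge $T\cup\{y\}$ with $y\ne b_B$ and $|T\cap X|\le n-2k$. It leaves at least $k-1$ vertices of $X$ uncovered, so it is disjoint from some $R\cup\{b_B\}$ with $R\subseteq X$. Hence such $T$ are completed only by $b_B$. Alternately enlarging the admitted ranges of $|T\cap X|$ and $|T\cap B'|$ by $n-2k-1$ per round then reaches every $(k-1)$-set $T\not\ni b_B$.

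The paper handles this step differently. It builds a second complete star on an $(n-k)$-set $Y$ with $X'\cup Y=[n]$ and checks that the two centres coincide. It then inducts on $|W\cap X_0|$ for disjoint sets $X_0,Y_0$ carved out of $X'$ and $Y$.
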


By extending our approach, we also improve the bound for $d=k-2$.

\begin{theorem}\label{thm:main_k-2}
    Given a positive integer $k\ge 3$, let $\mathcal{F} \subseteq \binom{[n]}{k}$ be an intersecting family with $n \geq 2k+7k^{2/3}+34k^{1/3}+162$.
    Then, 
    $$\delta_{k-2}(\mathcal{F}) \leq n-k+1. $$
    Furthermore, equality holds if and only if $\cf$ is a complete star.
\end{theorem}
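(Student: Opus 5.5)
Assume $\delta_{k-2}(\cf)\ge n-k+1$; we aim to show that $\cf$ is a complete star. Note that for a complete star centred at $v$, a $(k-2)$-set $S$ not containing $v$ has degree exactly $n-k+1$: we must add $v$ and one of the $n-k+1$ remaining vertices. So the bound is attained. We work with the $(k-1)$-shadow. For each $(k-2)$-set $S$, its link $L_S=\{\{x,y\}: S\cup\{x,y\}\in\cf\}$ is a graph on $[n]\setminus S$ with at least $n-k+1$ edges. Since $[n]\setminus S$ has $n-k+2$ vertices, such a graph cannot be a matching. It must either contain two edges sharing a vertex or have a large matching structure. Our plan is to use the intersecting property to show that every link $L_S$ is essentially a star, and then to glue these local stars into a global centre.

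\textbf{Step 1: links are stars or triangles.} Fix $S$. If $L_S$ has two disjoint edges $xy$ and $zw$, then every $F\in\cf$ meets both $S\cup\{x,y\}$ and $S\cup\{z,w\}$. We exploit this against sets $F$ disjoint from $S$. We would show that the many sets guaranteed by the minimum degree condition, built greedily from $(k-2)$-sets disjoint from a small configuration, force a contradiction once $n-2k$ exceeds the size of the configuration. The quantitative heart is the following. Take $T$ to be a $(k-2)$-set disjoint from $S\cup\{x,y,z,w\}$. Its link has at least $n-k+1$ edges, and each edge must meet both $S\cup\{x,y\}$ and $S\cup\{z,w\}$, i.e.\ lie in a small region of roughly $2k$ vertices. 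One then bounds how many such edges can exist if $L_T$ is to avoid everything. This is analogous to the $d=k-1$ argument of Theorem~\ref{thm:main_k-1}, where the $\sqrt{8k}$ term arises from counting edges within a clique of size about $\sqrt{2k}$. Here we expect an analogous counting with triples, which yields the $k^{2/3}$ term. So every link is an intersecting graph, hence a star or a triangle. A triangle has only $3<n-k+1$ edges, so $L_S$ is a star with at least $n-k+1$ edges.

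\textbf{Step 2: a centre function.} Define $c(S)$ to be the centre of the star $L_S$. We show that $c$ is constant. Consider two $(k-2)$-sets $S$ and $S'$ differing in one element. Their stars have large leaf sets. Since $S\cup\{c(S),y\}$ and $S'\cup\{c(S'),y'\}$ must intersect for all leaves $y$ and $y'$, we get $c(S)=c(S')$ unless $c(S)\in S'$ or a similar degenerate situation occurs. Those cases are handled using the leaf count exceeding $k$. Connectivity of the Johnson graph on sets avoiding the centre then gives a global $v$ with $v\in F$ for every $F$ containing some $S\not\ni v$. Every $F\in\cf$ contains such an $S$ when $k\ge 3$, so $\cf$ is contained in the star at $v$. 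Moreover $\delta_{k-2}\ge n-k+1$ forces every link of a set avoiding $v$ to be the full star, so $\cf$ is the complete star. This also gives the equality characterisation.

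\textbf{Main obstacle.} The main obstacle is Step 1: ruling out a link with two disjoint edges with only $n\ge 2k+O(k^{2/3})$. A naive argument needs $n\ge 3k$ or more. We would first try a greedy or shifting-free counting argument. Take $S$ with a non-star link and choose a small auxiliary set $A$ of size $\sim k^{1/3}$. Then double count pairs (edge, $(k-2)$-set) over $(k-2)$-subsets of the roughly $n-2k$ free vertices. Optimising $|A|$ against the constraint that $\binom{|A|}{3}$ must exceed roughly $k$ produces the $7k^{2/3}$ term. This is the step where the constants $34k^{1/3}+162$ would come from bookkeeping.
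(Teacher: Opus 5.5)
There is a genuine gap, and it is in Step 1, which claims something false. It is not true under your hypothesis that every link $L_S$ is an intersecting graph. The complete star centred at $v$ satisfies $\delta_{k-2}(\cf)=n-k+1$, yet for every $(k-2)$-set $S\ni v$ the link $L_S$ is the complete graph on $[n]\setminus S$. So no contradiction can be extracted from two disjoint edges $xy,zw$ in $L_S$, and your proposed count confirms this. For $T$ disjoint from $S\cup\{x,y,z,w\}$, the link $L_T$ may simply be the full star at a vertex of $S$: it has $n-k+1$ edges, each meeting $S$. This is exactly what the complete star does when its centre lies in $S$.

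Consequently the ``counting with triples'' that is supposed to produce $7k^{2/3}$ is not an argument. The real task, locating the centre $v$ before you know which links are stars, is never addressed; Step 2 presupposes it when it speaks of ``sets avoiding the centre''. Step 2 also fails on its own terms. For $k\ge 4$, two $(k-2)$-sets differing in one element share $k-3\ge 1$ vertices. Then $S\cup\{c(S),y\}$ and $S'\cup\{c(S'),y'\}$ intersect automatically and say nothing about $c(S)$ versus $c(S')$; only (nearly) disjoint sets constrain each other.

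The missing idea, which the paper uses, is to pass to size-two vertex covers of a small subfamily. If $\cf'\subseteq\cf$ and $W$ is a $(k-2)$-set disjoint from $V(\cf')$, then every pair in $N_\cf(W)$ is a size-two vertex cover of $\cf'$. Hence it suffices to build $\cf'\ni e$ on at most $k+7k^{2/3}+O(k^{1/3})$ vertices all of whose size-two covers contain one vertex $v$. Then $N_\cf(W)$ is a substar at $v$ and $d_\cf(W)\le n-k+1$.

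The paper builds $\cf'$ in two stages. First it shrinks the common intersection of the chosen edges in jumps of about $x\approx 5k^{2/3}$. Then it splits the vertex set into $s\approx 4k/x$ blocks and, for each pair of blocks, takes $(k-2)$-sets $S$ avoiding four blocks. A non-star $N_\cf(S)$ contains a $3$-matching, $Q$ or $K_4$, and cross-intersection cuts the covers inside that pair down to a cherry. Stars with different centres leave at most one cover, and stars with a common centre finish directly. The cost $x+3s^2\approx x+48k^2/x^2$ is what forces $x\asymp k^{2/3}$.

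The equality case is then glued via two disjoint sets $X_0,Y_0$, each of size at least $k+\lceil k^{2/3}\rceil$, on which $\cf$ is already a complete star at $v$. The gluing is by induction on $|W\cap X_0|$ (or $|W\cap Y_0|$), producing disjoint edges rather than using Johnson-graph adjacency.
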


The rest of the paper is organized as follows. In \Cref{sec:proof_main}, we prove \Cref{thm:main_k-1}, and in \Cref{sec:proofs_k-2}, we prove \Cref{thm:main_k-2}.

\section{Proof of Theorem~\ref{thm:main_k-1}}\label{sec:proof_main}

Our proof of \Cref{thm:main_k-1} proceeds in two steps. In the first step, we find a subfamily $\mathcal{H}$ of $\mathcal{F}$ such that $\mathcal{H}$ has at most one vertex cover of size $1$ and $|V(\mathcal{H})|$ is small. Note that if we view $\mathcal{H}$ as a subhypergraph of $\mathcal{F}$, then $V(\mathcal{H})$ is defined as the union of all edges in $\mathcal{H}$ unless we specify.
In the second step, we show that the restriction of $\cf$ to $V(\mathcal{H})$ is a complete star, and that it can be extended to a complete star on $V(\cf)$.
The following lemma addresses the first step.

\begin{lemma}\label{lem:k-1_vertex_cover}
    Given a positive integer $k\ge 2$, let $\mathcal{F} \subseteq \binom{[n]}{k}$ be an intersecting family with $n \geq k+\left\lceil\frac{\sqrt{8k+1}-1}{2}\right\rceil+2$ and $\delta_{k-1}(\cf)\ge 1$.
    Suppose $e\in\cf$ is an edge.
    Then there exists a subhypergraph $\mathcal H\subseteq \cf$ containing $e$ such that
    \[
    \left|\bigcap_{f\in\mathcal H}f\right|\le 1
    \quad\text{and}\quad
    |V(\mathcal H)|\le k+\left\lceil\frac{\sqrt{8k+1}-1}{2}\right\rceil+2.
    \]
\end{lemma}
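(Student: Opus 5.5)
The plan is to build $\mathcal H$ greedily, starting from $\mathcal H_0=\{e\}$. Write $m=\left\lceil\frac{\sqrt{8k+1}-1}{2}\right\rceil$; this is the least integer with $\binom{m+1}{2}\ge k$. The only tool is the hypothesis $\delta_{k-1}(\cf)\ge 1$: for any $(k-1)$-set $S$ we may add to $\mathcal H$ some edge $f=S\cup\{y\}\in\cf$. We use it to shrink the common intersection $I=\bigcap_{f\in\mathcal H}f$ while keeping $V(\mathcal H)$ small. Throughout we track $I\subseteq e$ and the pool $P=V(\mathcal H)\setminus e$ of vertices already used outside $e$. Since $|V(\mathcal H)|=k+|P|$, the goal is to drive $|I|$ down to at most $1$ while keeping $|P|\le m+2$. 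We seed the pool with one vertex $w\notin e$, which exists because $n\ge k+m+2>k$. Concretely, we take the first step with $S=(e\setminus\{a_1,a_2\})\cup\{w\}$ for two vertices $a_1,a_2\in I$.

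A general step, with current $|P|=p$ and $|I|\ge 2$, goes as follows. Choose a set $A\subseteq I$ of size $\min(p+1,|I|)$, and let $S$ consist of $P$ together with $k-1-p$ vertices of $e\setminus A$. This is possible as long as $|e\setminus A|\ge k-1-p$, which holds since $|A|\le p+1$. Let $f=S\cup\{y\}$ be an edge of $\cf$. There are two cases, according to where $y$ lies.
\begin{itemize}
\item If $y\in A$, then $f$ misses the other $|A|-1$ vertices of $A$, so $|I|$ drops by at least $|A|-1$ and $P$ is unchanged.
\item If $y\notin e$, then $f\cap A=\emptyset$, so $|I|$ drops by $|A|$, and $P$ grows by at most one (namely by $y$).
\end{itemize}
(If $y\in e\setminus A$, the drop is again $|A|$ and $P$ does not grow.) In short, with pool size $p$ every step kills at least $p$ vertices of $I$ while the pool stays of size $p$, unless $I$ is nearly exhausted. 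The pool grows by one only in a step that kills $p+1$ vertices.

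The accounting then runs as follows. Suppose the pool reaches size $p$ at some point; it did so through growth steps at sizes $1,2,\dots,p-1$. These growth steps alone killed at least $2+3+\dots+p=\binom{p+1}{2}-1$ vertices of $I$. Since we need only kill $k-1$ vertices, the pool cannot grow beyond the first $p$ with $\binom{p+1}{2}-1\ge k-1$. That is $p=m$, up to the $+O(1)$ slack from the seeding and the last step, which the $+2$ in the statement is meant to absorb. Meanwhile non-growth steps still kill at least $p\ge 1$ per step, so the process terminates. At termination $|I|\le 1$, and $|V(\mathcal H)|\le k+m+2$ as required.

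The delicate point I expect is exactly this bookkeeping at the boundary. First, the size $|A|$ must be reduced when $|I|<p+1$. Second, we must check that the final steps never push $|P|$ past $m+2$; in particular a last growth step may occur with $|A|$ small, and we must make sure it is counted correctly. Third, $S$ must be a genuine $(k-1)$-set, which requires $p\le k-1$; this follows since $m+2\le k$ for $k$ not tiny, and small $k$ can be handled directly. I would settle these by verifying the inequality $\binom{m+1}{2}\ge k$ against the exact drop counts, possibly refining the choice of $A$ so that each growth step at pool size $p$ is guaranteed to kill exactly $p+1$ vertices.
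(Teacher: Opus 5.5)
Your proposal is correct and is essentially the paper's proof: your pool size $p=|V(\mathcal H)|-k$ is the paper's $d_i=v_i-k$, and your set $S=V(\mathcal H)\setminus A$ with $|A|=p+1$ is the paper's $(k-1)$-set $W$ with $|U_i\setminus W|=d_i+1$. Likewise, your switch to $A=I$ when $|I|\le p+1$ is the paper's terminal case $|V(\mathcal F_i)\setminus U_i|\ge k-1$. The only difference is bookkeeping. The paper sums $u_i-u_{i+1}\ge d_i$ over all steps to get $D(D+1)/2\le k$. Your growth-step count also settles the boundary issues you flag: only the last growth step can be the final step, and at most $k-2$ vertices of $I$ die before the final step, which in fact gives $|V(\mathcal H)|\le k+m$.
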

\begin{proof}
    If $k=2$, then we can take $e$ and arbitrary edge intersecting $e$ to form $\mathcal{H}$, and the lemma holds. 
    Assume that $k\ge 3$.
    We construct a sequence of subhypergraphs $\mathcal{F}_0 \subseteq \mathcal{F}_1 \subseteq \cdots \subseteq \mathcal{F}$ inductively as follows.
    Let $\mathcal{F}_0$ be the subhypergraph of $\mathcal{F}$ with edge set $\{e\}$ and vertex set $e \cup \{v\}$ for some vertex $v \notin e$.
    For each $i \ge 0$, let
    \[
    U_i := \bigcap_{f \in \mathcal{F}_i} f, \quad u_i := |U_i|, \quad \text{and} \quad v_i := |V(\mathcal{F}_i)|.
    \]
    Note that $U_0 = e$, $u_0 = k$, and $v_0 = k+1$.

Assume that $\mathcal{F}_i$ has been constructed with $u_i \geq 2$.
    If $|V(\mathcal{F}_i) \setminus U_i| \geq k-1$, choose $W$ to be a $(k-1)$-subset of $V(\mathcal{F}_i) \setminus U_i$.
    If $|V(\mathcal{F}_i) \setminus U_i| < k-1$, choose $W$ to be a $(k-1)$-subset containing all vertices of $V(\mathcal{F}_i) \setminus U_i$ and some vertices of $U_i$.
    Since $\delta_{k-1}(\mathcal{F}) \geq 1$, there exists an edge $f_{i+1} \in \mathcal{F}$ such that $W \subseteq f_{i+1}$.
    Set
    \[
    \mathcal{F}_{i+1} := \mathcal{F}_i \cup \{f_{i+1}\}, \quad U_{i+1} := U_i \cap f_{i+1}.
    \]

    If $|V(\mathcal{F}_i) \setminus U_i| \geq k-1$, we have chosen $W \subseteq V(\mathcal{F}_i) \setminus U_i$; hence $u_{i+1} \leq 1$, and the construction terminates.
    If $|V(\mathcal{F}_i) \setminus U_i| < k-1$, then
    \[
    |U_i \setminus W| = u_i - (k-1 - |V(\mathcal{F}_i) \setminus U_i|) = v_i - k + 1.
    \]
    Since $f_{i+1}$ contains $W$, it contains exactly one vertex outside $W$, and thus at most one vertex of $U_i \setminus W$ belongs to $f_{i+1}$.
    Hence $u_{i+1} \leq u_i - (|U_i \setminus W| - 1) = u_i - (v_i - k)$.
    Letting $d_i := v_i - k$, we have $u_{i+1} \leq u_i - d_i$.

We observe that $d_0 = 1$, and each step adds at most one new vertex, so $d_{i+1} - d_i = v_{i+1} - v_i \in \{0, 1\}$.
    Let $t$ be the smallest index such that $u_t \leq 1$ or $|V(\mathcal{F}_t) \setminus U_t| \geq k-1$.
    Then, as $\sum_{i=0}^{t-1} (u_i - u_{i+1}) = u_0 - u_t \leq k$, we have $\sum_{i=0}^{t-1} d_i \leq k$.
    Let $D := d_{t-1}$.
    Since $d_0 = 1$ and the increments are at most $1$, the values $1, 2, \dots, D$ each appear at least once among $d_0, \dots, d_{t-1}$.
    Therefore,
    \[
    \frac{D(D+1)}{2} \leq \sum_{i=0}^{t-1} d_i \leq k,
    \]
    which yields $D \leq \left\lfloor \frac{\sqrt{8k+1} - 1}{2} \right\rfloor$.
    If $u_t \leq 1$, then we stop and set $\mathcal{H} = \mathcal{F}_t$.
    If $|V(\mathcal{F}_t) \setminus U_t| \geq k-1$, then by the previous argument, we have $u_{t+1} \leq 1$, so we can stop at $t+1$ and set $\mathcal{H} = \mathcal{F}_{t+1}$.
    In either case, we have
    \[
    |V(\mathcal{H})| \leq k + d_t + 1 \leq k + d_{t-1} + 2 \leq k + \left\lceil \frac{\sqrt{8k+1} - 1}{2} \right\rceil + 2.
    \]
    Moreover, $e \in \mathcal{H}$ and $\left| \bigcap_{f \in \mathcal{H}} f \right| \leq 1$, which completes the proof.
\end{proof}

Now we are ready to prove \Cref{thm:main_k-1}. For a family $\cf$ of $k$-subsets of $[n]$ and a subset $X \subseteq [n]$, we denote by $\cf[X]$ the family of all edges in $\cf$ that are contained in $X$.

\begin{proof}[Proof of \Cref{thm:main_k-1}]
    We choose an arbitrary edge $e \in \cf$ and apply \Cref{lem:k-1_vertex_cover} to obtain a subhypergraph $\mathcal{F}'\subseteq \cf$ containing $e$ such that $\mathcal{F}'$ has at most one vertex cover of size $1$ and $|V(\mathcal{F}')| \leq k+\left\lceil\frac{\sqrt{8k+1}-1}{2}\right\rceil+2$.
    Let $X = V(\mathcal{F}')$ and $X'$ be an arbitrary set containing $X$ with $|X'| = n-k > |X|$.
    We first prove that $\mathcal{F}[X']$ is a complete star.
    If $\mathcal{F}'$ does not have a vertex cover of size $1$, then any $(k-1)$-subset of $V(\mathcal{F}) \setminus X'$ has codegree $0$, which is a contradiction. 
    Thus, there is a unique vertex $v \in V(\mathcal{F}')$ such that all edges of $\mathcal{F}'$ contain $v$.
    Let $W\subseteq X' \setminus \{v\}$ be an arbitrary $(k-1)$-subset and let $f \in \cf$ be an edge containing $W$.
    If $f\neq W \cup \{v\}$, let $\cf'' := \cf' \cup \{f\}$.
    Then $\mathcal{F}''$ does not have a vertex cover of size $1$, so any $(k-1)$-subset of $V(\mathcal{F}) \setminus (V(\mathcal{F}'') \cup X')$ has codegree $0$, which is a contradiction.
    Therefore, $\mathcal{F}[X']$ is a complete star centered at $v$.

    We now choose an arbitrary $(k-1)$-subset $W\subseteq V(\mathcal{F}) \setminus X'$. 
    We note that, since $d_{\cf}(W) > 0$ and $\mathcal{F}$ is intersecting, $g:=W \cup \{v\}$ is an edge of $\mathcal{F}$.
    Then, by applying \Cref{lem:k-1_vertex_cover} to $g$ instead of $e$, 
    we obtain a subhypergraph $\mathcal{H}\subseteq \cf$ containing $g$ such that $\mathcal H$ has at most one vertex cover of size $1$ and $|V(\mathcal H)|\le k+\left\lceil \frac{\sqrt{8k+1}-1}{2}\right\rceil+2$.
    If there is a vertex $w\in V(\cf)\setminus(X'\cup V(\mathcal H))$, then we define a hypergraph $\mathcal{H}'$ with $V(\mathcal{H}'):= V(\mathcal{H})\cup \{w\}$ and $E(\mathcal{H}') := E(\mathcal{H})$.
    If such a vertex $w$ does not exist, we simply let $\mathcal{H}' = \mathcal{H}$.
    Then, by applying the same argument as in the first paragraph, we can show that there exists an $(n-k)$-subset $Y \supseteq V(\mathcal{H}')$ such that $\mathcal{F}[Y]$ is a complete star centered at $v'$ for some $v'\in V(g)$.
    Note that $X' \cup Y = V(\mathcal{F})$. 
    Suppose that $v\neq v'$.
    As $X'$ and $Y$ have size at least $k+1$ and $|X' \cup Y|\geq 2k+2$, we can choose $(k-1)$-subsets $W \subseteq X' \setminus \{v, v'\}$ and $W' \subseteq Y \setminus \{v, v'\}$ such that $W \cap W' = \emptyset$.
    Then $W \cup \{v\}$ and $W' \cup \{v'\}$ are two disjoint edges of $\mathcal{F}$, which is a contradiction.
    Therefore, $v = v'$.

    We finally claim that $\mathcal{F}$ is a complete star centered at $v$. 
    Let $\ell = \left\lceil\frac{\sqrt{8k+1}-1}{2}\right\rceil-1$. 
    Recall that $|X' \cap Y| = n-2k \geq  \ell+3$. 
    We arbitrarily partition $(X' \cap Y) \setminus \{v\}$ into two sets $Z_1$ and $Z_2$ such that each has size at least $\lceil \ell/2 \rceil+1$, and let $X_0 = (X' \setminus Y) \cup Z_1$ and $Y_0 = (Y \setminus X') \cup Z_2$.
    We note that $|X_0|, |Y_0| \geq k+\lceil \ell/2 \rceil+1$ and they are disjoint. 
    We proceed by mathematical induction on $t$ to prove the following: 
    For every $(k-1)$-subset $W \subseteq V(\mathcal{F}) \setminus \{v\}$, if $|W \cap X_0| \leq t \lceil \ell/2 \rceil$ or $|W \cap Y_0| \leq t \lceil \ell/2 \rceil$, then $W \cup \{v\}$ is an edge of $\mathcal{F}$.
    
    The base case $t=0$ holds because $\mathcal{F}[X_0 \cup \{v\}]$ and $\mathcal{F}[Y_0 \cup \{v\}]$ are complete stars centered at $v$.
    Suppose that the statement is true for $t-1$, and let $W \subseteq V(\mathcal{F}) \setminus \{v\}$ be a $(k-1)$-subset such that $|W \cap X_0| \leq t \lceil \ell/2 \rceil$.
    Suppose $W \cup \{v\}$ is not an edge. Then there exists a vertex $w \neq v$ such that $W \cup \{w\}$ is an edge.
    Then $|X_0 \setminus (W \cup \{w\})| \geq |X_0| - t\lceil \ell/2 \rceil -1 \geq k-(t-1)\lceil \ell/2 \rceil$.
    Therefore, there exists a $(k-1)$-subset $U \subseteq (X_0 \cup Y_0) \setminus (W \cup \{w\})$ such that $|U \cap X_0| \geq k-1-(t-1)\lceil \ell/2 \rceil$.
    Since $X_0$ and $Y_0$ are disjoint, $|U \cap Y_0| \leq (t-1)\lceil \ell/2 \rceil$, and $U$ is disjoint from $W \cup \{w\}$. 
    By the induction hypothesis, $U \cup \{v\}$ is an edge of $\mathcal{F}$.
    Then $W \cup \{w\}$ and $U \cup \{v\}$ are two disjoint edges of $\mathcal{F}$, which is a contradiction.
    Therefore, $W \cup \{v\}$ is an edge of $\mathcal{F}$.
    By symmetry, the same argument holds when $|W \cap Y_0| \leq t\lceil \ell/2 \rceil$.
    Therefore, by induction, $W \cup \{v\}$ is an edge of $\mathcal{F}$ for every $(k-1)$-subset $W \subseteq V(\mathcal{F}) \setminus \{v\}$, which implies that $\mathcal{F}$ contains a complete star centered at $v$.
    Finally, since $\mathcal{F}$ is intersecting and $n \geq 2k+1$, $\mathcal{F}$ must be exactly the complete star centered at $v$.
\end{proof}
    

\section{Proof of Theorem~\ref{thm:main_k-2}}\label{sec:proofs_k-2}

Let $\cf$ be a family of $k$-sets, and let $S$ be a set with $|S|<k$. Denote by $N_{\cf}(S)$ the family of $(k-|S|)$-sets $T$ such that $T\cup S \in \cf$. For $A\subseteq V(\cf)$, let $P_{\cf}(A)$ be the family of all size-two vertex covers of $\cf$ contained in $A$. Note that $P_{\cf}(A)$ can be viewed as a graph on the vertex set $A$.
Denote by $Q$ the disjoint union of an edge and a copy of $K_{1, 2}$.

\begin{lemma}\label{lem:edge_cherry_2}
    Let $k\geq 3$ and $n \geq k+2$. 
    Let $\cf \subseteq \binom{[n]}{k}$ be an intersecting family, and let $\cf'\subseteq \cf$ be a subfamily with disjoint sets $S, A \subseteq V(\cf')$ such that $|S|=k-2$. 
    Then the following statements hold.
    \begin{enumerate}[(1)]
        \item $N_{\cf'}(S)$ and $P_{\cf'}(A)$ are cross-intersecting.
        \item If $|N_{\cf}(S)|\geq n-k+1$ and $n \geq k+5$, then either $N_{\cf}(S)$ is a complete star on $V(\cf) \setminus S$ or there exists a family $\cf''\supseteq \cf'$ such that $|V(\cf'')|\leq |V(\cf')|+6$ and $P_{\cf''}(A)$ is a subgraph of a cherry. 
    \end{enumerate}
\end{lemma}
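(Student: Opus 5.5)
Part~(1) follows directly from the definitions. Take $T\in N_{\cf'}(S)$, so $T\cup S\in\cf'$, and take $\{a,b\}\in P_{\cf'}(A)$. Since $\{a,b\}$ is a vertex cover of $\cf'$, it meets $T\cup S$. Since $\{a,b\}\subseteq A$ and $A\cap S=\emptyset$, it must meet $T$. Hence $N_{\cf'}(S)$ and $P_{\cf'}(A)$ are cross-intersecting.

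For part~(2), let $G:=N_{\cf}(S)$, a graph on the $m:=n-k+2\geq 7$ vertices of $V(\cf)\setminus S$ with at least $m-1$ edges. For a set $E_0$ of edges of $G$, let $\cf'':=\cf'\cup\{T\cup S: T\in E_0\}$. Since $S\subseteq V(\cf')$, this adds at most $2|E_0|$ new vertices. Adding edges can only destroy vertex covers, so $P_{\cf''}(A)\subseteq P_{\cf'}(A)$. By part~(1), every edge of $P_{\cf''}(A)$ meets every edge of $E_0$. It therefore suffices to find at most three edges $E_0$ of $G$ such that every graph cross-intersecting $E_0$ is a subgraph of a cherry. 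Then $|V(\cf'')|\leq|V(\cf')|+6$.

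I would check three configurations for $E_0$.
\begin{enumerate}[(a)]
\item $E_0$ is a $3$-matching. No pair can meet three disjoint edges, so $P_{\cf''}(A)=\emptyset$.
\item $E_0$ is a copy of $Q$: an edge $xy$ disjoint from a cherry $a b c$ with centre $b$. A pair meeting both $ab$ and $bc$ either contains $b$ or equals $\{a,c\}$. The pair $\{a,c\}$ misses $xy$, so the pair must be $\{b,x\}$ or $\{b,y\}$. Thus $P_{\cf''}(A)\subseteq\{bx,by\}$, which is a cherry.
\item $G$ contains a $K_4$. Taking $E_0$ to be three of its edges does not suffice, so here I would argue directly that no pair meets all six edges of a $K_4$. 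To stay within six new vertices, I would instead look for a copy of $Q$, which is available when $m-1>6$. In the boundary case $m=7$ with $G\cong K_4$ plus isolated vertices, I would re-examine the argument, for instance by using an isolated vertex together with the condition $\delta$ implicit in $|N_{\cf}(S)|$.
\end{enumerate}

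The main obstacle is the structural claim: a graph $G$ on $m\geq 7$ vertices with at least $m-1$ edges that is not a complete star contains a copy of $Q$ or a $3$-matching (up to the $K_4$ boundary case above). I would prove it as follows. Suppose $G$ is $Q$-free and has no $3$-matching.
\begin{enumerate}[(i)]
\item If $G$ has no two disjoint edges, then $G$ is a star or a triangle. A triangle has $3<m-1$ edges, and a star with $m-1$ edges on $m$ vertices is complete.
\item Otherwise, take disjoint edges $e_1,e_2$. $Q$-freeness forbids any other edge from touching exactly one of $e_1,e_2$ while being disjoint from the other. Together with the absence of a $3$-matching, this should force every edge to lie inside $e_1\cup e_2$.
\end{enumerate}
This gives at most $6$ edges, contradicting $|E(G)|\geq m-1\geq 6$ except in the $K_4$ case on $m=7$ vertices, which I would handle separately as in~(c). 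If this case analysis resists, I would fall back on a degree argument: since $G$ is not a star, a vertex of maximum degree misses some edge, and from there one can build a copy of $Q$ directly.
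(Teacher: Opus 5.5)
Part (1) is correct, and so is most of part (2), and it follows the paper. That includes the reduction to choosing edges $E_0\subseteq N_{\cf}(S)$ such that every pair meeting all of $E_0$ lies in a cherry, the $3$-matching and $Q$ computations, and the structural argument (i)--(ii). That argument shows that a non-star $G$ with at least six edges contains a $3$-matching, a $Q$, or a $K_4$.

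The gap is that you never close the case $n=k+5$ with $N_{\cf}(S)\cong K_4$ plus three isolated vertices. The hypotheses allow this case: the six sets $S\cup\{x,y\}$ with $\{x,y\}\subseteq\{a,b,c,d\}$ pairwise meet in $S$, and $6=n-k+1$. This graph contains neither $Q$ nor a $3$-matching, so your fallback of building a $Q$ from a maximum-degree vertex cannot work. Your suggested repair also has nothing to use. The lemma has no minimum-degree hypothesis, and an isolated vertex of $N_{\cf}(S)$ gives no set through $S$ that could be added to $\cf'$.

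The missing observation is that your vertex count $2|E_0|$ is too crude. Because $S\subseteq V(\cf')$, adding $\{T\cup S: T\in E_0\}$ creates at most $\bigl|\bigcup_{T\in E_0}T\bigr|$ new vertices. For $E_0=E(K_4)$ this is at most $4\le 6$. As you note yourself, no pair meets all six edges of $K_4$, so $P_{\cf''}(A)=\emptyset$. This is exactly how the paper handles its third configuration. With this fix, you no longer need the restriction $|E_0|\le 3$ or the separate search for $Q$ when $m-1>6$, and your argument becomes the paper's proof.
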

\begin{proof}
    (1): Since $S\cap A=\emptyset$, every member of $N_{\cf'}(S)$ must intersect every size-two vertex cover of $\cf'$ contained in $A$ because of the intersecting property. Thus, $N_{\cf'}(S)$ and $P_{\cf'}(A)$ are cross-intersecting.

    (2): Assume that $|N_{\cf}(S)|\geq n-k+1$.
    If $N_{\cf}(S)$ is a complete star on $V(\cf) \setminus S$, then we are done. Suppose then that $N_{\cf}(S)$ is not a complete star.
    Then $N_{\cf}(S)$ contains a matching of size two, and since there are $|N_{\cf}(S)| \geq n-k+1 \geq 6$ edges, $N_{\cf}(S)$ has a subgraph $F$ isomorphic to one of the following three configurations: a matching of size three, $Q$, or $K_4$.

    For each edge $xy\in E(F)$, let $f_{xy}:=S\cup\{x,y\}\in \cf$. 
    Add all such edges that are not already in $\cf'$ to $\cf'$, and denote the resulting family by $\cf''$.
    Since $F$ uses at most six vertices outside $S$, we have
    \[
    |V(\cf'')|\le |V(\cf')|+6.
    \]

    By part (1), $N_{\cf''}(S)$ and $P_{\cf''}(A)$ are cross-intersecting.
    Now we consider the chosen configuration $F$.
    If $F$ is a matching of size three, then no two-element set can intersect all three pairwise disjoint edges, so $P_{\cf''}(A)=\emptyset$.
    If $F=K_4$, then no two-element set can intersect all six edges of $K_4$, so again $P_{\cf''}(A)=\emptyset$.
    If $F=Q$ (an edge disjoint from a cherry), then every member of $P_{\cf''}(A)$ must intersect all three edges of $Q$; hence $P_{\cf''}(A)$ is contained in the two edges joining the center of the cherry to the two endpoints of the disjoint edge. Therefore, $P_{\cf''}(A)$ is a subgraph of a cherry.

    In every case, either $N_{\cf}(S)$ is a star, or there exists a family $\cf''\supseteq \cf'$ with $|V(\cf'')|\le |V(\cf')|+6$ such that $P_{\cf''}(A)$ is a subgraph of a cherry.
\end{proof}

Similar to the proof of \Cref{thm:main_k-1}, our main proof proceeds in two phases. In the first phase, we construct a subhypergraph $\cf'\subseteq \cf$ such that all size-two vertex covers of $\cf'$ have a common vertex. 
In the second phase, we use this property of $\cf'$ to show that $\cf$ is a complete star.
The following lemma is the key to the first phase.

\begin{lemma}\label{lem:k-2_vertex_cover}
    Let $\mathcal{F} \subseteq \binom{[n]}{k}$ be an intersecting family.
    Let $n \geq 2k+7k^{2/3}+34k^{1/3}+160$.
    If $\delta_{k-2}(\mathcal{F}) \geq n-k+1$, then for every edge $e\in \cf$, there exists a subhypergraph $\mathcal{F}'\subseteq \mathcal{F}$ containing $e$ such that $|V(\mathcal{F}')|\leq k+7k^{2/3}+34k^{1/3}+160$, and there exists a vertex $v\in e$ such that every size-two vertex cover of $\cf'$ contains $v$.
\end{lemma}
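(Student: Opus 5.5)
The plan is to mimic the greedy construction in the proof of \Cref{lem:k-1_vertex_cover}, but to run it in two stages. The first stage reduces the common intersection of the current subfamily to one vertex. The second stage destroys every size-two vertex cover that avoids that vertex. Throughout, \Cref{lem:edge_cherry_2} is used to control size-two covers at a cost of at most $6$ new vertices per application. Since $\delta_{k-2}(\cf)\ge n-k+1\ge 1$, every $(k-2)$-set extends to an edge. Moreover, $|N_\cf(S)|\ge n-k+1$ for every $(k-2)$-set $S$, so part (2) of \Cref{lem:edge_cherry_2} applies to every such $S$.

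\emph{Stage 1 (shrinking the common intersection).} Start with $\cf_0=\{e\}$ and, exactly as before, set $U_i=\bigcap_{f\in\cf_i}f$, $u_i=|U_i|$ and $d_i=|V(\cf_i)|-k$. At each step choose a $(k-2)$-set $W$ that contains as much of $V(\cf_i)\setminus U_i$ as possible and is completed by vertices of $U_i$. Extend $W$ to an edge $f_{i+1}$. Since $f_{i+1}$ has only two vertices outside $W$, we get $u_{i+1}\le u_i-(d_i-1)$, while $d_i$ grows by at most $2$ per step. The same summation argument as in \Cref{lem:k-1_vertex_cover} gives $\sum_i (d_i-1)\le k$. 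Hence $d\le O(\sqrt k)$ by the time the common intersection is at most one vertex, or $V(\cf_i)\setminus U_i$ is large enough for one final step to finish. If the intersection becomes empty we still continue, and below we let $v\in e$ be a vertex chosen to lie in the last surviving intersection. This costs only $O(\sqrt k)$ vertices, well within the budget.

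\emph{Stage 2 (killing covers avoiding $v$).} Let $\mathcal P$ be the graph of size-two covers of the current family avoiding $v$. Since every cover meets $e$, and every edge not containing $v$ has at most $k$ vertices, each such cover has one endpoint in a small set. The idea is to partition $V(\cf')\setminus\{v\}$ into blocks $A_1,\dots,A_m$ with $m\approx k^{1/3}$ and each $|A_j|\approx k^{2/3}$. For each block, choose a $(k-2)$-set $S_j$ disjoint from $A_j$; this is possible because the total vertex count is about $k+O(k^{2/3})$. Apply \Cref{lem:edge_cherry_2}(2) to $S_j$ and $A_j$. Either $N_\cf(S_j)$ is a complete star centered at some $x_j$, or at most $6$ new vertices make $P(A_j)$ a subgraph of a cherry. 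In the star case, every pair $S_j\cup\{x_j,y\}$ is an edge, so adding one or two of them with $y$ chosen inside $A_j$ forces every cover inside $A_j$ to contain $x_j$; again $P(A_j)$ is a star. Covers crossing between blocks are then handled by a second round that uses sets $S$ chosen to avoid unions of two blocks. The number of pairs of blocks is $m^2\approx k^{2/3}$, each costing $O(1)$ vertices. This is where I expect the total $7k^{2/3}+34k^{1/3}+O(1)$ to come from once $m$ is optimized: a $k^{2/3}$ term from block overlap and pair handling, and a $k^{1/3}$ term from the $6$-vertex additions per block. Finally, the surviving covers form a union of stars and cherries, which have only a bounded number of possible centers. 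Adding edges $S\cup\{x,y\}$ that avoid a given center while containing $v$ forces all covers to go through $v$.

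\emph{Main obstacle.} The delicate step is Stage 2. One must ensure that the sets $S_j$ can always be chosen disjoint from the relevant blocks while staying inside the current vertex set, which requires $|V(\cf')|-|A|\ge k-2$. One must also ensure that the cherry/star centers produced for different blocks are consistent with the single vertex $v\in e$. I would first try to show that any center $x_j\neq v$ can be eliminated by one more application of \Cref{lem:edge_cherry_2} with $A$ chosen to contain $x_j$ and $v$. I would then check that the accumulated vertex count stays below $k+7k^{2/3}+34k^{1/3}+160$.
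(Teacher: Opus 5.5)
\textbf{There is a genuine gap in Stage 2.}

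Stage 1 is a workable alternative to the paper's Phase 1, up to fixable details. (The paper pads $e$ with a fixed set $X$ of $x=\lceil 5k^{2/3}\rceil$ vertices and removes at least $x$ vertices of the common intersection per step, at a cost of at most two new vertices.) The details to fix are these. With your choice of $W$ one has $|U_i\setminus W|=d_i+2$, so in fact $u_{i+1}\le u_i-d_i$. You need $d_0\ge 1$, for instance via a dummy vertex; with $V(\cf_0)=e$ the choice $f_1=e$ is allowed and nothing moves. The final step only gives $u\le 2$, so one more step using $|N_\cf(W)|\ge 2$ is needed. You must also pad to $k+\Theta(k^{2/3})$ vertices before choosing the sets $S_j$.

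\textbf{Missing idea 1: the star outcome.} The gap is at the star outcome of \Cref{lem:edge_cherry_2}(2). When $N_\cf(S)$ is a complete star centred at $x$, you only learn that the covers inside $A$ avoiding $v$ pass through $x$. Such a star can have $|A|-1$ edges. So after all pairs of blocks, the surviving covers can have order $m^2$ different centres and span almost all of $V(\cf')$. Your claim of ``a bounded number of possible centers'' is therefore false in general, and your final cleanup step cannot be carried out. (Also, adding $S_j\cup\{x_j,y_1\}$ and $S_j\cup\{x_j,y_2\}$ with $y_1,y_2\in A_j$ leaves $\{y_1,y_2\}$ as a cover.)

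The paper's remedy is, for each pair $A_i\cup A_{i'}$, to test all sets $S_{r,r'}$ that avoid the four blocks $A_i,A_{i'},A_r,A_{r'}$:
\begin{enumerate}[(a)]
\item If some $N_\cf(S_{r,r'})$ is not a star, you get a cherry.
\item If two are stars with centres $w\ne w'$, only $\{w,w'\}$ can survive.
\item If all are stars with a common centre $w$, then every $2$-set $T\not\ni w$ misses some $S_{r,r'}$ (take $r,r'$ to be the blocks that $T$ meets). So after adding the star edges, every size-two cover contains $w$, and one stops with $w$. This $w$ need not be your Stage-1 vertex.
\end{enumerate}

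\textbf{Missing idea 2: degree counting.} This is what dissolves your ``consistency with $v$'' obstacle. You use $\delta_{k-2}(\cf)\ge n-k+1$ only to extend sets to edges and to invoke \Cref{lem:edge_cherry_2}. The proof needs it as a counting tool. If $\cf^*\subseteq\cf$ has few vertices, then for any $(k-2)$-set $W$ disjoint from $V(\cf^*)$, every pair in $N_\cf(W)$ is a size-two vertex cover of $\cf^*$ by the intersecting property. Hence $\cf^*$ must have at least $n-k+1$ such covers.

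The paper uses this three times:
\begin{enumerate}[(i)]
\item to exclude a final star centre $u\ne v$, since then at most $2s+k-1$ covers remain;
\item to show that in the cherry case every edge through $S'$ contains $v$, since otherwise at most $k+2$ covers remain;
\item to show that the terminal vertex lies in $e$, since otherwise all covers are $\{v,y\}$ with $y\in e$, at most $k$ of them.
\end{enumerate}

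Your proposed fix was another application of \Cref{lem:edge_cherry_2} with $A$ containing $x_j$ and $v$. That application can simply return a complete star centred at $x_j$ again, so it does not terminate on its own. Without the counting argument, Stage 2 does not close.
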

\begin{proof}
    Let $e \in \cf$ be an arbitrary edge, and let $x$ be a positive integer to be determined later. Note that we choose $x$ to be a function of $k$ such that $x \geq 10$ for all $k \geq 3$.
    Our construction consists of two phases.
    In Phase 1, our goal is to construct a subhypergraph $\cf''$ such that $\left|\bigcap_{f \in \cf''} f\right| \leq 1$ and $|V(\cf'')|\leq k+x + 2 \lceil k/x \rceil +4$. 
    To do so, we inductively construct a sequence of subhypergraphs $\cf_0 \subseteq \cf_1 \subseteq \dots \subseteq \cf$.
    Let $\cf_0$ be the subhypergraph with edge set $\{e\}$ and vertex set $e \cup X$, where $X$ is an arbitrary set of $x$ vertices disjoint from $e$. Let $U_0 = e$.
    Suppose that we have already constructed $\cf_i$ with common intersection $U_i = \bigcap_{f \in \cf_i} f$.
    We first assume that $|U_i| \geq 3$. 
    If $|U_i| \geq x+2$, let $V_i \subseteq U_i$ be an arbitrary subset of size $x+2$.
    Then $V(\cf_0) \setminus V_i$ is a $(k-2)$-subset. 
    By the minimum degree condition, there exists an edge $e_{i+1} \in \cf$ containing $V(\cf_0) \setminus V_i$.
    Let $\cf_{i+1} = \cf_i \cup \{e_{i+1}\}$.
    In this case, we obtain $x \leq |U_i \setminus U_{i+1}| \leq x+2$.
    If $3 \leq |U_i| \leq x+1$, then we take $V_i$ to be an arbitrary subset of size $x+2$ in $V(\cf_0)$ containing $U_i$. 
    Then we obtain $|U_{i+1}| \leq 2$.
    Note that in either case, $|V(\cf_{i+1}) \setminus V(\cf_{i})| \leq 2$.
    Thus, after $\ell \leq \lceil k/x \rceil +1$ steps, we obtain $|U_{\ell}| \leq 2$.
    
    If $|U_{\ell}| \leq 1$, then we are done.
    If $|U_{\ell}| = 2$, let $W$ be an arbitrary $(k-2)$-subset of $V(\cf_{\ell}) \setminus U_{\ell}$. Since $d_{\cf}(W) \ge n-k+1$, $N_{\cf}(W)$ contains a matching of size two or $N_{\cf}(W)$ is a star centered at some vertex $v \in V(\cf)$.
    In the former case, let $f_1, f_2 \in \mathcal{F}$ be two edges containing $W$ corresponding to the matching. Then $\cf_{\ell +1} := \cf_{\ell} \cup \{f_1, f_2\}$, and thus $U_{\ell +1} = \emptyset$.
    In the latter case, we select an edge $f \in \cf$ containing $W \cup \{v\}$ and a vertex not in $U_{\ell}$. Then, by setting $\cf_{\ell +1} := \cf_{\ell} \cup \{f\}$, we have $U_{\ell +1} \subseteq \{v\} \cap U_{\ell}$, which has size at most one.
    In both cases, we add at most four vertices to $\cf_{\ell}$, and thus $|V(\cf_{\ell +1})| \leq k+x + 2\lceil k/x \rceil + 4$.
    By adding additional isolated vertices if necessary, we also assume that $|V(\cf_{\ell +1})| \geq k+x$. 
    By setting $\cf'' := \cf_{\ell +1}$, we conclude the first phase.

    In the second phase, we construct the desired hypergraph $\cf'$ by adding edges to $\cf''$.
    If $|U_{\ell +1}| = 1$, let $v$ be its unique vertex. If $|U_{\ell +1}| = 0$, let $v$ be an arbitrary vertex in $e \subseteq V(\cf_{\ell +1})$.
    Note that any size-two vertex cover of $\cf_{\ell +1}$ that uses a vertex not in $V(\cf_{\ell +1})$ must intersect $U_{\ell +1}$, so it must contain $v$.
    We partition $V(\cf_{\ell+1}) \setminus \{v\}$ into $s$ parts $A_1,\dots, A_s$, each of size at at most $x/4$. Since $|V(\cf_{\ell+1})|\le k+x + 2\lceil k/x\rceil + 4$, we have
    \[
    s\le \left\lceil\frac{4(k+x+2\lceil k/x\rceil+3)}{x}\right\rceil.
    \]
    For each pair of indices $1 \leq i<i' \leq s$, we apply the following procedure. Suppose that we have already constructed $\cf_j$ for some $j\geq \ell +1$. 
    Let $A = A_i \cup A_{i'}$, and for any two distinct indices $r, r' \in [s] \setminus \{i, i'\}$, let $S_{r, r'}$ be a $(k-2)$-subset of $\bigcup_{t=1}^s A_t \setminus (A_i \cup A_{i'} \cup A_{r} \cup A_{r'})$. Note that such a set $S_{r, r'}$ exists because $|\bigcup_{t=1}^s A_t| \geq k+x-1$. 
    We apply \Cref{lem:edge_cherry_2} to $A$ and $S_{r, r'}$ and distinguish three cases depending on the structure of $N_{\cf}(S_{r, r'})$.

    \medskip
    
    \textbf{Case 1:} If there exist $r, r'$ such that $N_{\cf}(S_{r, r'})$ is not a star, then by \Cref{lem:edge_cherry_2}, there exists a supergraph $\cf_{j+1} \supseteq \cf_j$ such that $P_{\cf_{j+1}}(A)$ is a subgraph of a cherry. In this case, we define this family to be $\cf_{j+1}$ and proceed to a different choice of $i, i'$.

    \textbf{Case 2:} $N_{\cf}(S_{r, r'})$ is a star for every possible choice of $r$ and $r'$, and there exist two choices $S, S'$ such that $N_{\cf}(S)$ and $N_{\cf}(S')$ are stars with different centers $w$ and $w'$, respectively.
    We define $\cf_{j+1}$ by adding to $\cf_j$ all edges of the form $S \cup \{w\} \cup \{z\}$ for $z \in V(\cf_{\ell+1}) \setminus (S \cup \{w\})$, and all edges of the form $S' \cup \{w'\} \cup \{z\}$ for $z \in V(\cf_{\ell+1}) \setminus (S' \cup \{w'\})$. 
    Then $P_{\cf_{j+1}}(A)$ contains at most one element, namely $\{w, w'\}$. 
    Indeed, for any size-two subset $T \subseteq A$, if $T$ does not contain $w$ (or $w'$, respectively), then there is an edge of $\cf_{j+1}$ containing $S$ (or $S'$, respectively), $w$, and a vertex not in $T$. Thus, $T$ cannot be a size-two vertex cover of $\cf_{j+1}$.
    We also note that $|V(\cf_{j+1})| \leq |V(\cf_j)| + 2$.
    We then keep $\cf_{j+1}$ and proceed to a different choice of $i, i'$.

    \textbf{Case 3:} $N_{\cf}(S_{r, r'})$ is a star for every possible choice of $r$ and $r'$, and they are all centered at the same vertex $w$.
    In this case, we define $\cf_{j+1}$ by adding all edges contained in $S_{r, r'} \cup A \cup \{w\}$ to $\cf_j$ for every possible choice of $r$ and $r'$.
    We claim that all size-two vertex covers of $\cf_{j+1}$ contain $w$; i.e., $\cf_{j+1}$ is the desired subhypergraph.
    Let $T \subseteq V(\cf)$ be a size-two set which does not contain $w$.
    Then we can choose $r$ and $r'$ such that $S_{r, r'}$ is disjoint from $T$ by choosing $r$ and $r'$ be the indices such that $A_r$ and $A_{r'}$ intersects $T$ (if $r$ or $r'$ does not exist, we choose the remaining index arbitrarily).
    Consequently, since $V(\mathcal{F}_{j+1}) \setminus (S_{r, r'} \cup \{w\})$ has more than three vertices, there exists an edge of $\cf_{j+1}$ that contains $S_{r, r'}$ and is disjoint from $T$. 
    Therefore, $T$ cannot be a size-two vertex cover of $\cf_{j+1}$.
    Thus, all size-two vertex covers of $\cf_{j+1}$ contain $w$, so $\cf_{j+1}$ is the desired subhypergraph.
    In this case, we terminate the process and set $\cf' = \cf_{j+1}$.
    \medskip

Suppose that this process completes for every possible choice of $i$ and $i'$ without terminating.
    Let $\mathcal{H}$ be the resulting subhypergraph of $\mathcal{F}$.
    We note that all size-two vertex covers of $\mathcal{H}$ either contain $v$ or belong to $P_{\mathcal{H}}(A_i \cup A_{i'})$ for some $i$ and $i'$.
    In addition, by construction, $P_{\mathcal{H}}(A_i \cup A_{i'})$ is a subgraph of a cherry for every $i$ and $i'$.
    Let $A$ be the union of all vertices contained in $P_{\mathcal{H}}(A_i \cup A_{i'})$ over all $i, i'$. Note that $|A|\le 3s^2$.
    Let $S$ be a $(k-2)$-subset of $V(\mathcal{H}) \setminus (A \cup \{v\})$. (If $|V(\mathcal{H}) \setminus (A \cup \{v\})| < k-2$, we append arbitrary vertices from $V(\mathcal{F})$ to $V(\mathcal{H})$ to ensure such a set $S$ exists.)
    By applying \Cref{lem:edge_cherry_2} to $A$ and $S$, either $N_{\cf}(S)$ is a complete star, or there exists a supergraph $\mathcal{H}' \subseteq \mathcal{F}$ with $|V(\mathcal{H}')| \leq |V(\mathcal{H})|+6$ such that $P_{\mathcal{H}'}(A)$ is a subgraph of a cherry.
    Depending on the structure of $N_{\cf}(S)$ and $P_{\mathcal{H}'}(A)$, we distinguish three cases. In each case, we extend the current hypergraph to $\cf'$ to eliminate all size-two vertex covers that do not contain $v$.

    \medskip

    \textbf{Case 1:} $N_{\cf}(S)$ is a star with center $u \neq v$. 
    We add to $\mathcal{H}$ all edges of $\cf$ that contain $S \cup \{u\}$ and are contained in $V(\mathcal{H})$ to obtain a family $\mathcal{H}'$.
    If $T$ is a size-two vertex cover of $\mathcal{H}'$ and $T$ belongs to $P_{\mathcal{H}}(A_i \cup A_{i'})$ for some $i$ and $i'$, then since $S$ is disjoint from $T$, $T$ is a size-two vertex cover of $\mathcal{H}'$ if and only if it contains $u$. There are at most $2s$ such sets $T$.
    For a size-two set $T$ containing $v$, it is not a vertex cover of $\mathcal{H}'$ if it is disjoint from $S \cup \{u\}$.
    Therefore, $\mathcal{H}'$ has at most $2s+(k-1) = 2s+k-1$ size-two vertex covers. Then a $(k-2)$-subset $W \subseteq V(\mathcal{F}) \setminus V(\mathcal{H}')$ has degree $d_{\cf}(W) \leq 2s+k-1$. Since $n \geq 2k+2s+5$, we have $2s+k-1 < n-k+1$, which contradicts the assumption.

    \textbf{Case 2:} $N_{\cf}(S)$ is a star with center $v$.
    We add to $\mathcal{H}$ all edges of $\cf$ that contain $S \cup \{v\}$ and are contained in $V(\mathcal{H})$ to obtain a family $\mathcal{H}'$.
    Then for any size-two set $T \in P_{\mathcal{H}'}(A_i \cup A_{i'})$ for some $i$ and $i'$, since it is disjoint from $S \cup \{v\}$, it cannot be a vertex cover of $\mathcal{H}'$.
    Therefore, all size-two vertex covers of $\mathcal{H}'$ contain $v$. By setting $\cf'=\mathcal{H}'$, we obtain the desired family.

    \textbf{Case 3:} $P_{\mathcal{H}'}(A)$ is a subgraph of a cherry. 
    Let $A'$ be the set of vertices contained in any element of $P_{\mathcal{H}'}(A)$, and let $S'$ be a $(k-2)$-subset of $V(\mathcal{H}') \setminus (A' \cup \{v\})$.
    If there exists an edge $e \in \cf$ containing $S'$ but not $v$, we add it to $\mathcal{H}'$ to obtain a family $\mathcal{H}''$.
    Then any size-two vertex cover of $\mathcal{H}''$ either belongs to $P_{\mathcal{H}'}(A)$ or contains $v$ and intersects $e$.
    Therefore, there are at most $k+2$ size-two vertex covers of $\mathcal{H}''$ in $V(\mathcal{F})$. Thus, any $(k-2)$-subset $W \subseteq V(\mathcal{F}) \setminus V(\mathcal{H}'')$ has degree $d_{\cf}(W) \leq k+2 < n-k+1$, which contradicts the assumption.
    Therefore, $N_{\cf}(S')$ is a star centered at $v$. By adding to $\mathcal{H}'$ any edge of $\cf$ that contains $S' \cup \{v\}$ and is disjoint from $A$, we obtain a family $\cf'$ such that all size-two vertex covers contain $v$.

    \medskip

    Note that in all cases, $v \in e$ since $e \in \cf'$.
    Finally, we estimate the size of the vertex set $V(\cf')$. 
    We first recall that $|V(\cf'')|\leq k+x + 2\lceil k/x\rceil + 4$ and
    \[
    s\le \left\lceil\frac{4(k+x+2\lceil k/x\rceil+3)}{x}\right\rceil.
    \]
    During the process, we add at most $3s^2$ vertices to $\cf''$ to obtain $\mathcal{H}$, and at most $6$ vertices to $\mathcal{H}$ to obtain $\cf'$. Thus, we have
    \[
    |V(\cf')|\leq k+x + 2\lceil k/x\rceil + 4 + 3s^2 + 6.
    \]
    By taking $x=\lceil 5k^{2/3}\rceil$, we have
    \[
    s\le \left\lceil\frac{4(k+x+2\lceil k/x\rceil+3)}{x}\right\rceil
    \leq \frac{4(k+x+2\lceil k/x\rceil+3)}{x}+1
    =\frac{4k}{x}+5+\frac{8\lceil k/x\rceil+12}{x}
    \leq \frac{4k}{x}+7,
    \]
    where the last inequality follows from $4\lceil k/x\rceil+6\le x$, which holds for $x=\lceil5k^{2/3}\rceil$ and $k\ge 3$.
    Therefore,
    \[
    |V(\cf')|\leq k+x + 2\lceil k/x\rceil + 10 + 3\left(\frac{4k}{x}+7\right)^2
    \leq k+7k^{2/3}+34k^{1/3}+160.\qedhere
    \]

    We finally remark that $v \in e$. Indeed, if $v \notin e$, then since $e$ is an edge of $\cf'$, every size-two vertex cover of $\cf'$ must intersect $e$. Therefore, there are at most $k$ size-two vertex covers of $\cf'$, and thus a $(k-2)$-subset $W \subseteq V(\cf) \setminus V(\cf')$ has degree $d_{\cf}(W) \leq k < n-k+1$, which contradicts the assumption.
\end{proof}

Note that this lemma already shows that $\delta_{k-2}(\cf) \leq n-k+1$, since we can take a $(k-2)$-subset $W \subseteq V(\cf)\setminus V(\cf')$ and observe that $N_{\cf}(W)$ is a subset of the size-two vertex covers of $\cf'$.
We now show that if $\delta_{k-2}(\cf) = n-k+1$, then $\cf$ is a complete star.

\begin{proof}[Proof of \Cref{thm:main_k-2}]
    By the previous observation, we already have $\delta_{k-2}(\cf)\le n-k+1$.
    Thus, it remains to prove uniqueness when equality holds.
    Assume $\delta_{k-2}(\cf)=n-k+1$.

    Choose an arbitrary edge $e\in \cf$.  
    By \Cref{lem:k-2_vertex_cover}, there exists a subhypergraph $\cf'\subseteq \cf$ containing $e$ with
    \[
    |V(\cf')|\le k+7k^{2/3}+34k^{1/3}+160\le n-k-2,
    \]
    and a vertex $v\in e$ such that every size-two vertex cover of $\cf'$ contains $v$.
    Let $X:=V(\cf')$, and let $X^*$ be an arbitrary set of size $n-k$ containing $X$.

    We first show that $\cf[X^*]$ is a complete star centered at $v$.
    Let $S\subseteq X^*\setminus\{v\}$ be a $(k-2)$-subset.
    Suppose there is an edge $f\in \cf$ containing $S$ but not $v$.
    Write $f=S\cup\{a,b\}$.
    Since $|V(\cf)\setminus X^*| = k$, we can choose a $(k-2)$-subset $W\subseteq (V(\cf)\setminus X^*)\setminus\{a,b\}$.
    Then $W$ is disjoint from every edge of $\cf'$ and also disjoint from $f$.
    Hence, for every $T\in N_{\cf}(W)$, the pair $T$ is a size-two vertex cover of $\cf'$, so $v\in T$.
    Also, since $W\cup T$ must intersect $f$ and $v\notin f$, the second vertex of $T$ must lie in $S\cup\{a,b\}$.
    Therefore, we have $d_{\cf}(W)=|N_{\cf}(W)|\le |S|+2=k$,
    which contradicts $d_{\cf}(W)\ge \delta_{k-2}(\cf)=n-k+1>k$.
    Thus, every edge containing $S$ contains $v$. 
    Together with the minimum degree condition, this implies that $N_{\cf}(S)$ is a star centered at $v$ for every $(k-2)$-subset $S\subseteq X^*\setminus\{v\}$, so $\cf[X^*]$ is a complete star centered at $v$.

    Let $B := V(\cf) \setminus X^*$, which is a set of exactly $k$ vertices.
    Choose an arbitrary $(k-2)$-subset $W \subseteq B$.
    Since $W \cap X^* = \emptyset$, every pair in $N_{\cf}(W)$ is a size-two vertex cover of $\cf[X^*]$, and thus contains $v$.
    Therefore, every edge containing $W$ contains $v$.
    Pick an edge $g = W \cup \{v, u\} \in \cf$.
    By applying the same construction to $g$, and again extending up to size $n-k$, we obtain an $(n-k)$-subset $Y \subseteq V(\cf)$ such that $\cf[Y]$ is a complete star centered at some $v'$, and we may choose the extension such that $X^* \cup Y = V(\cf)$.
    Since every edge containing $W$ contains $v$, the star on $Y$ forces $v' = v$.

    Let
    \[
    R := (X^* \cap Y) \setminus \{v\}, \qquad A := [n] \setminus Y, \qquad B := [n] \setminus X^*.
    \]
    Then $|A| = |B| = k$ and $|R| = n-2k-1$.
    Let $\ell =  \lceil k^{2/3} \rceil - 2$. We arbitrarily partition $R$ into two sets $Z_1$ and $Z_2$ such that $|Z_1|, |Z_2| \geq \lceil k^{2/3} \rceil$. We also define $X_0 := A \cup Z_1$ and $Y_0 := B \cup Z_2$.
    Then we have $|X_0|, |Y_0| \geq k+\ell+2$, $X_0 \cap Y_0 = \emptyset$, and both $\cf[X_0 \cup \{v\}]$ and $\cf[Y_0 \cup \{v\}]$ are complete stars centered at $v$.
    We claim that for every integer $t\ge 0$ and every $(k-2)$-subset $W\subseteq V(\cf)\setminus\{v\}$, if $|W\cap X_0|\le t\ell$ or $|W\cap Y_0|\le t\ell$, then $N_{\cf}(W)$ is a complete star centered at $v$.

    For the base case $t=0$, we have $|W\cap X_0|=0$ or $|W\cap Y_0|=0$. In the former case, $W$ is a subset of $Y_0$ and suppose that $N_{\cf}(W)$ is not a star centered at $v$. 
    Then there exists an edge $f\in \cf$ containing $W$ but not $v$, and we can write $f=W\cup\{a,b\}$.
    Then we can choose an edge $e$ of $\cf$ in $X_0 \cup \{v\}$ that is disjoint from $f$ as $|X_0 \cup \{v\}| \geq k+1$ and $\cf[X_0 \cup \{v\}]$ is a complete star centered at $v$.
    Then $e$ and $f$ are disjoint, which contradicts the intersecting property of $\cf$.
    Suppose that the claim holds for $t-1$, and let $W\subseteq V(\cf)\setminus\{v\}$ be a $(k-2)$-subset with $|W\cap X_0|\le t\ell$ (the other case is symmetric).
    Suppose that $N_{\cf}(W)$ is not a star centered at $v$.
    Then there exists an edge $f\in \cf$ containing $W$ but not $v$, and we can write $f=W\cup\{a,b\}$.
    Since $|W\cap X_0|\le t\ell$, we have $|X_0 \cap f| \leq t \ell + 2$, and thus $|X_0 \setminus f| \geq k-(t-1)\ell$. 
    Therefore, we can choose a $(k-2)$-subset $W' \subseteq (X_0 \cup Y_0) \setminus f$ such that $|W' \cap X_0| \geq k-2-(t-1)\ell$.
    Since $X_0$ and $Y_0$ are disjoint, we have $|W' \cap Y_0| \leq (t-1)\ell$.
    By the induction hypothesis, $N_{\cf}(W')$ is a star centered at $v$.
    Thus, there exists an edge of $\cf$ of the form $W' \cup \{v\} \cup \{z\}$ for some $z \notin f$.
    Then this edge $W' \cup \{v\} \cup \{z\}$ is disjoint from $f$, which contradicts the intersecting property of $\cf$.
    Therefore, $N_{\cf}(W)$ is a star centered at $v$.
    By mathematical induction, the statement holds for all $t$. Therefore, for every $(k-2)$-subset $W \subseteq V(\cf) \setminus \{v\}$, $N_{\cf}(W)$ is a star centered at $v$, which completes the proof. 
\end{proof} 

\section*{Acknowledgements}
This work was initiated when the last two authors visited BUPT (Hainan) in December 2024 and they would like to express their gratitude to the host institute for the nice working environment.

\end{document}